\newtheorem{definition}{Definition}[section]
\newtheorem{theorem}[definition]{Theorem}
\newtheorem{lemma}[definition]{Lemma}
\newtheorem{corollary}[definition]{Corollary}
\newtheorem{proposition}[definition]{Proposition}
\theoremstyle{definition}
\newtheorem{example}[definition]{Example}
\newcommand{\C}{\mathbb{C}}
\newcommand\tr{ \operatorname{Tr} }
\title[Entanglement Breaking Rank]{Entanglement Breaking Rank via Complementary Channels and Multiplicative Domains}
\begin{document}

\author[D.~W. Kribs, J.~Levick, R. Pereira, M.~Rahaman]{David~W.~Kribs$^{1,2}$, Jeremy Levick$^{1,2}$, Rajesh Pereira$^{1}$, Mizanur Rahaman$^{3}$}

\address{$^1$Department of Mathematics \& Statistics, University of Guelph, Guelph, ON, Canada N1G 2W1}
\address{$^2$Institute for Quantum Computing, University of Waterloo, Waterloo, ON, Canada N2L 3G1}
\address{$^3$ Univ Lyon, ENS Lyon, UCBL, CNRS, Inria, LIP, F-69342, Lyon Cedex 07, France}

\subjclass[2020]{15B48, 47L90, 81P40, 81P45, 94A40}

\keywords{quantum entanglement, separable state, entanglement breaking channel, completely positive map, multiplicative domain, Choi rank, entanglement breaking rank, matrix rank.}


\begin{abstract}
Quantum entanglement can be studied through the theory of completely positive maps in a number of ways, including by making use of the Choi-Jamilkowski isomorphism, which identifies separable states with entanglement breaking quantum channels, and optimal ensemble length with entanglement breaking rank. The multiplicative domain is an important operator structure in the theory of completely positive maps. We introduce a new technique to determine if a channel is entanglement breaking and to evaluate entanglement breaking rank, based on an analysis of multiplicative domains determined by complementary quantum channels. We give a full description of the class of entanglement breaking channels that have a projection as their Choi matrix, and we show the entanglement breaking and Choi ranks of such channels are equal.
\end{abstract}

\maketitle


\section{Introduction}

Quantum entanglement remains one of the most important and deeply studied topics in modern quantum information theory. It is a broad topic that can be studied through a number of different perspectives, including via the theory of completely positive maps and the Choi-Jamilkowski isomorphism, which identifies separable bipartite quantum states with the special class of quantum channels (completely positive, trace-preserving maps) called entanglement breaking \cite{holevo1998coding,horodecki2003entanglement}. The identification also links the important measure of optimal ensemble cardinality of a separable state with the entanglement breaking rank of its associated channel.

Thus, the notion of entanglement breaking rank has received recent and growing interest. Another reason for this, is that when considering some special classes of entanglement breaking channels, the entanglement breaking rank has a surprising connection with the geometry of the rank-one Kraus operators that define the channels. For example, as was shown in \cite{pandey2020entanglement},  the entanglement breaking rank of a particular quantum channel on $M_d(\mathbb{C})$ is $d^2$ if and only if there exist a set of $d^2$ equiangular lines in $\C^d$. Note that the latter existence statement is still an open problem in general and a popular conjecture of Zauner (see references in \cite{pandey2020entanglement}).

In this paper, we first introduce a new technique to determine if a channel is entanglement breaking based on an analysis of complementary quantum channels \cite{holevo2012quantum,horodecki2003entanglement,holevo2007complementary}. In the theory of completely positive maps, the multiplicative domain of a map \cite{choi1974schwarz,paulsen2002completely} is an important operator structure that has arisen in a number of relatively recent investigations in quantum information theory \cite{choi2009multiplicative,johnston2011generalized,kribs2018quantum}. We make use of multiplicative domains associated with complementary quantum channels to evaluate the entanglement breaking rank of entanglement breaking channels that have Choi matrix equal to a projection, showing that the entanglement breaking and Choi ranks of the channel are equal.

This paper is organized as follows. The rest of this section includes a presentation of the preliminary notions we require at the intersection of operator theory and quantum information. In Section~2 we recall the formulation of complementary quantum channels, and we present a description of when maps are entanglement breaking in terms of the complementary channel perspective. In Section~3, we begin by briefly recalling the multiplicative domain from the theory of completely positive maps, and then we derive our main result, equating the entangling breaking rank to the Choi rank for entanglement breaking channels with Choi matrix a projection, and we finish the section by giving a full description of what channels satisfy the result. We conclude in Section~4 with some remarks on related work and possible extensions of the approach.

\subsection{Preliminaries}

Let $M_n(\C)$ be the set of $n\times n$ complex matrices for a fixed positive integer $n \geq 1$. A linear map $\Phi : M_n(\C) \rightarrow M_m(\C)$ is called a (quantum) {\it channel} if it is completely positive and trace-preserving \cite{holevo2012quantum,nielsen}. By a {\it state} or {\it density matrix} we mean a positive semi-definite matrix with trace equal to one.

In order to be consistent with the interpretation of dual vectors as row vectors, we take inner products to be linear in the second element, so that if $v = \begin{pmatrix} v_1 \\ \vdots \\ v_n\end{pmatrix}$, we have $v^* = (\overline{v_1}, \ \cdots, \ \overline{v_n})$ and $v^*w = \langle v,w\rangle$. Hence, $uv^*$ will denote the rank-one operator, whose action is defined in a way consistent with the notation; e.g., $(uv^*)w = \langle v,w\rangle u$. 

Every channel $\Phi : M_n(\C) \rightarrow M_m(\C)$ has an operator-sum decomposition (non-unique) given by,
\begin{align}
\Phi(X) & = \sum_{i=1}^d K_i X K_i^* \quad \quad \forall X \in M_n(\C),  \label{krausform}
\end{align}
with the {\it Kraus operators} $K_i\in M_{m\times n}(\C)$ satisfying the trace-preserving condition $\sum_i K_i^* K_i = I_n$.

The minimal number $d$ of Kraus operators in an operator-sum form Equation~(\ref{krausform}) for $\Phi$ is called the {\it Choi rank} of $\Phi$, which we will denote by $\operatorname{Choi-rank}(\Phi)$. This rank is so-called, as it is equal to the rank of the {\it Choi matrix} $J(\Phi)\in M_n(\C) \otimes M_m(\C)$, where
\[
J(\Phi) = \sum_{i,j=1}^n E_{ij} \otimes \Phi(E_{ij}),
\]
and with the `matrix units' for $M_n(\C)$ given by $E_{ij} = e_ie_j^*$ for a fixed orthonormal basis $\{e_1,\cdots, e_n\}$ for $\C^n$.

When viewed as a map, $J(\cdot)$ defines the {\it Choi-Jamiolkowski isomorphism} between completely positive maps from $M_n(\C)$ to $M_m(\C)$ and positive semi-definite operators in $M_n(\C) \otimes M_m(\C)$. For a channel, the Choi matrix is a density matrix when scaled by the factor $n^{-1}$. The isomorphism can be used to study quantum entanglement; in particular because the channels that have a (normalised) separable Choi matrix are precisely the channels that break entanglement in the following sense.

\begin{definition}
A channel $\Phi : M_n(\C) \rightarrow M_m(\C)$ is {\it entanglement breaking} if for all $k\geq 1$ and states $X\in M_k(\C)\otimes M_n(\C)$, the output state $(\mathrm{id}_k \otimes \Phi)(X)$ is separable, where $\mathrm{id}_k$ is the identity map on $M_k(\C)$.
\end{definition}

This class of channels has been heavily investigated in recent years following the original works  \cite{holevo1998coding,horodecki2003entanglement} (for example, see recent related work of the authors and collaborators \cite{rahaman2018eventually,rahaman2019new,girard2020mixed,kribs2021nullspaces,ahiable2021entanglement}).  There are a number of equivalent characterisations of entanglement breaking channels beyond the definition; most importantly in our present setting is the rank-one form,
\begin{align}
\Phi(X)
&= \sum_{i=1}^r (u_iv_i^*)X(u_iv_i^*)^*\label{rankone},
\end{align}
for some set of vectors $u_i\in\C^m$, which without loss of generality we take to be unit vectors, and a set of vectors $v_i\in\C^n$ that satisfy $\sum_i v_i v_i^* = I_n$ due to trace-preservation.

Our investigation focusses on the following notion for such maps \cite{pandey2020entanglement}.

\begin{definition}
Given an entanglement-breaking channel $\Phi$, we call
the minimum $r$ such that Equation~(\ref{rankone}) holds the \emph{entanglement breaking rank} of $\Phi$, or $\operatorname{EB-rank}(\Phi)$ for short.
\end{definition}

For an entanglement breaking channel $\Phi$, it is not hard to see that the number $\operatorname{EB-rank}(\Phi)$ is equal to the {\it length} or {\it optimal ensemble cardinality} of the corresponding separable state $J(\Phi)$ (i.e., the minimal number of pure states required to write the state as a convex linear combination of those pure states).

It is clear that the Choi rank is bounded above by the EB-rank, since it is equal to the minimal number of Kraus operators required in an operator-sum form for the map.
It is quite hard to compute the EB-rank of an entanglement breaking channel as one has to search through all the rank-one Kraus decompositions of the map and find one with the least number of them. In the rest of the paper we identify a class of maps for which these ranks coincide.


\section{Complementary Channels and a Characterisation of Entanglement Breaking Channels}

We next discuss the complement of a quantum channel \cite{holevo2012quantum,horodecki2003entanglement,holevo2007complementary} and show how its behaviour can be used to detect when a map is entanglement breaking. We finish by focussing on the situation considered in the next section.

\begin{definition} Given a channel $\Phi:M_n(\C) \rightarrow M_m(\C)$ with Kraus operators $\{K_i\}_{i=1}^d$, the \emph{complementary channel}, or \emph{complement}, is the map $\Phi^C : M_n(\C) \rightarrow M_d(\C)$ defined  by
\begin{equation}\label{complement}
\Phi^C(X):=\sum_{i,j=1}^d \mathrm{tr}(K_i^*K_jX)E_{ji}.
\end{equation}
\end{definition}

There are a number of easily derived properties of complementary channels, including the following which we recall now as it will be used in what follows. We remind the reader of the (Hilbert-Schmidt) dual map $\Phi^\dagger$ for a map $\Phi$, defined via the trace relation; $\tr(\Phi^\dagger(X) Y) := \tr(X \Phi(Y))$.

There is never a unique set of Kraus operators for a channel, but (as described in \cite{nielsen}) if $\{K_i\}_{i=1}^d$ is a minimal set (and so $d$ is necessarily the Choi rank of $\Phi$), and $\{L_j\}_{j=1}^r$ are any other set of Kraus operators, then there exists an isometry $W:\C^d \rightarrow \C^r$, such that if $w_i$ are the columns of $W^*$ with coordinates $w_i = (w_{ji})$, we have 
\begin{equation}\label{changeofKraus} L_i = \sum_{j=1}^d \overline{w_{ji}}K_j.\end{equation}

Note that changing Kraus operators corresponds to adjunction of the complement by the isometry $W$:
\begin{align*} \sum_{i,j=1}^r \mathrm{tr}(L_j^*L_iX)E_{ij} &= \sum_{i,j=1}^r \sum_{k,l=1}^d \overline{w_{ki}}w_{jl} \mathrm{tr}(K_l^*K_kX)E_{ij} \\
& = \sum_{j=1}^r \overline{w_{ki}}E_{ik}\biggl(\sum_{k,l=1}^d \mathrm{tr}(K_l^*K_kX)E_{kl}\biggr) \sum_{i=1}^r w_{jq}E_{lj}\\
& = \sum_{i,p=1}^{r,d} \overline{w_{pi}}E_{ip}\biggl(\sum_{k,l=1}^d \mathrm{tr}(K_l^*K_kX)E_{kl}\biggr)\sum_{q,j=1}^{d,r} w_{jq}E_{qj}\\
& = W\biggl(\sum_{k,l=1}^d \mathrm{tr}(K_l^*K_kX)E_{kl}\biggr)W^*.
\end{align*}
For the rest of the paper, when we talk of `the' complementary channel for a given channel $\Phi$, we will be referring to the complement defined by a minimal set of Kraus operators for the map. By the above argument, such maps are equivalent up to adjunction by a unitary map.

As a useful observation in the calculations to come, note that Equation~(\ref{complement}) immediately implies that
\[
\mathrm{tr}(\Phi^{C\dagger}(E_{ij})X)  = \Phi^C(X)_{ji}  = \mathrm{tr}(K_i^*K_jX),
\]
and as this holds for all $X$, we have for all $1 \leq i,j \leq d$,
\begin{equation}\label{compadjoint}
\Phi^{C\dagger}(E_{ij}) = K_i^*K_j.
\end{equation}
The following result makes use of this fact to give a characterization of entanglement breaking channels in terms of complementary channels.

\begin{proposition}\label{compadjointrankone}
Let $\Phi : M_n(\C) \rightarrow M_m(\C)$ be a channel of Choi rank $d$. Then $\Phi$ is entanglement breaking if and only if there exist vectors $\{w_i\}_{i=1}^r\subseteq \C^d$ such that
\begin{equation}\label{resofidentity}\sum_{i=1}^r w_iw_i^* = I_d ,
\end{equation}
and there exist vectors $\{v_i\}_{i=1}^r \subseteq \C^n$ such that
\begin{equation} \Phi^{C\dagger}(w_iw_i^*) = v_iv_i^* \ \forall i.\end{equation}
\end{proposition}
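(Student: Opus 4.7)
The plan is to exploit the identity $\Phi^{C\dagger}(E_{ij})=K_i^*K_j$ from Equation~(\ref{compadjoint}) to translate the condition on the complementary channel into one on Kraus operators, and then to relate a rank-one Kraus family witnessing entanglement breaking to a fixed minimal Kraus family $\{K_j\}_{j=1}^d$ via the isometric change-of-Kraus formula in Equation~(\ref{changeofKraus}).

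For the forward direction, suppose $\Phi$ is entanglement breaking. By Equation~(\ref{rankone}) there exist unit vectors $u_i\in\C^m$ and vectors $v_i\in\C^n$ with $\sum_{i=1}^r v_iv_i^*=I_n$ and with $L_i:=u_iv_i^*$ a (generally non-minimal) Kraus family for $\Phi$. Equation~(\ref{changeofKraus}) supplies an isometry $W:\C^d\to\C^r$ whose adjoint has columns $w_1,\ldots,w_r\in\C^d$ satisfying $L_i=\sum_{j=1}^d\overline{w_{ji}}K_j$. The condition $W^*W=I_d$ unpacks entry by entry to $\sum_i w_iw_i^*=I_d$, which is Equation~(\ref{resofidentity}). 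Expanding $L_i^*L_i$ and invoking Equation~(\ref{compadjoint}),
\[
L_i^*L_i=\sum_{j,k=1}^d (w_i)_j\,\overline{(w_i)_k}\,K_j^*K_k=\Phi^{C\dagger}\Bigl(\sum_{j,k=1}^d (w_i)_j\,\overline{(w_i)_k}\,E_{jk}\Bigr)=\Phi^{C\dagger}(w_iw_i^*),
\]
while directly $L_i^*L_i=v_iu_i^*u_iv_i^*=v_iv_i^*$ since $u_i$ is a unit vector. Combining yields $\Phi^{C\dagger}(w_iw_i^*)=v_iv_i^*$.

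For the converse, start with vectors $\{w_i\}_{i=1}^r\subseteq\C^d$ and $\{v_i\}_{i=1}^r\subseteq\C^n$ satisfying the two hypotheses. The resolution of identity $\sum_i w_iw_i^*=I_d$ says exactly that the $d\times r$ matrix with columns $w_i$ is a coisometry, so its adjoint $W$ is an isometry $\C^d\to\C^r$. Define $L_i:=\sum_{j=1}^d\overline{(w_i)_j}K_j$. The standard verification using $W^*W=I_d$ shows that $\{L_i\}_{i=1}^r$ is another Kraus family for $\Phi$, and the same expansion as above gives $L_i^*L_i=\Phi^{C\dagger}(w_iw_i^*)=v_iv_i^*$. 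Since this has rank at most one, so does each $L_i$, and a polar decomposition writes every nonzero $L_i$ in the form $u_iv_i^*$ for a unit vector $u_i\in\C^m$ (after absorbing norm and phase). Trace preservation of $\{L_i\}$ then automatically delivers $\sum_i v_iv_i^*=\sum_i L_i^*L_i=I_n$, so $\Phi$ admits the rank-one form of Equation~(\ref{rankone}) and is entanglement breaking.

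The main thing to get right is the dictionary between the isometry $W:\C^d\to\C^r$ and the vectors $w_i\in\C^d$ sitting as columns of $W^*$, in particular the equivalence $W^*W=I_d\iff\sum_i w_iw_i^*=I_d$; once this is in place, both directions collapse to the single line $L_i^*L_i=\Phi^{C\dagger}(w_iw_i^*)$ secured by Equation~(\ref{compadjoint}). A minor supporting step in the converse is the polar-decomposition argument that promotes the rank-one identity $L_i^*L_i=v_iv_i^*$ to a factorization $L_i=u_iv_i^*$ with $u_i$ a unit vector.
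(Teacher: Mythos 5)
Your proposal is correct and follows essentially the same route as the paper: the identity $\Phi^{C\dagger}(E_{jk})=K_j^*K_k$, the isometric change-of-Kraus dictionary $W^*W=I_d\iff\sum_i w_iw_i^*=I_d$, and the one-line computation $L_i^*L_i=\Phi^{C\dagger}(w_iw_i^*)$ are exactly the paper's ingredients in both directions. The only cosmetic difference is that you phrase the converse via polar decomposition of the rank-one $L_i$, where the paper phrases it via the map $w\mapsto K(\overline{w})$; these are the same observation.
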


\begin{proof}
Suppose $\Phi$ is entanglement breaking; then as per Equation (\ref{rankone}) we may find a set of Kraus operators for $\Phi$ of the form $\{u_iv_i^*\}_{i=1}^r$. If $\{K_i\}_{i=1}^d$ are a minimal set of Kraus operators for $\Phi$, then by Equation (\ref{changeofKraus}) there must exist an isometry $W$, such that with $w_i$ the columns of $W^*$,
\begin{equation}\label{changetorankone} u_iv_i^*  = \sum_{j=1}^d \overline{w_{ji}}K_j.\end{equation}

Then multiply both sides of Equation (\ref{changetorankone}) by the respective adjoints (and recalling our default assumption that each $u_i$ is a unit vector) to get
\begin{align*} v_i v_i^* = \|u_i\|^2 v_iv_i^* &= \sum_{j,k=1}^d \overline{w_{ji}}w_{ki}K_k^*K_j \\
& = \sum_{j,k=1}^d w_{ki} \overline{w_{ji}} \Phi^{C\dagger}(E_{kj}) \\
& = \Phi^{C\dagger}\biggl(\bigl(\sum_{k=1}^d w_{ki}e_k\bigr)\bigl(\sum_{j=1}^d \overline{w_{ji}}e_j^*\bigr)\biggr) \\
& = \Phi^{C\dagger}(w_iw_i^*),
\end{align*}
and where we use Equation (\ref{compadjoint}) to replace $K_i^*K_j$ with $\Phi^{C\dagger}(E_{ij})$.
Since $W$ is an isometry,
$$W^*W = \sum_{i=1}^r w_iw_i^* = I_d.$$

For the converse, note that $\Phi^{C\dagger}(ww^*)$ for any (column) vector $w\in\C^d$ is $K(\overline{w})^*K(\overline{w})$ where $K: \C^d \rightarrow M_{m\times n}(\C)$ is defined by
\[
K(x) : = \sum_{i=1}^d x_iK_i ,
\]
for any $x\in \C^d$ with coordinates $x_i$.

Thus, $\Phi^{C\dagger}(ww^*) = vv^*$ for some vector $v$  if and only if $K(\overline{w})\in M_{m\times n}(\C)$ is a rank-one matrix. If there exist $\{w_iw_i^*\}$ each with $\Phi^{C\dagger}(w_iw_i^*) = v_iv_i^*$, then we may form the matrix $W^*$ whose columns are $w_i$; $W^*W = \sum_{i=1}^r w_iw_i^*$. So $W$ is an isometry if and only if Equation (\ref{resofidentity}) holds. If it does, then $\{L_i = K(\overline{w_i})\}_{i=1}^r$ is also a set of Kraus operators for $\Phi$, and so together we have a set of rank-one Kraus operators for $\Phi$. These must be of the form $L_i = u_iv_i^*$ for some vectors $u_i,v_i$, completing the proof.
\end{proof}

We can use this result to frame entanglement breaking rank as follows.

\begin{corollary}\label{EBrankcor}
Let $\Phi: M_n(\C)\rightarrow M_m(\C)$ be an entanglement breaking channel with Choi rank $d$. Then $\operatorname{EB-rank}(\Phi)$  is the smallest $r$ such that there exist $r$ vectors $\{w_i\}_{i=1}^r \subseteq \C^d$ and $\{v_i\}_{i=1}^r \subseteq \C^n$   satisfying
\[
\sum_{i=1}^r w_iw_i^* = I_d; \quad \Phi^{C\dagger}(w_iw_i^*) = v_iv_i^* \quad  \forall 1 \leq  i \leq r.
\]
\end{corollary}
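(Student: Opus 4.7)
The statement is phrased as a corollary, and indeed the plan is to extract it almost immediately from Proposition \ref{compadjointrankone} by keeping track of cardinalities in both directions of that proof. My strategy is to show $\operatorname{EB-rank}(\Phi) = r_0$, where $r_0$ denotes the minimum $r$ appearing in the statement, by establishing both inequalities.

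First I would fix notation: by definition $\operatorname{EB-rank}(\Phi)$ is the smallest $r$ such that $\Phi(X) = \sum_{i=1}^{r} (u_i v_i^*) X (u_i v_i^*)^*$ for unit vectors $u_i \in \C^m$ and vectors $v_i \in \C^n$. The two directions then proceed by observing that Proposition \ref{compadjointrankone} produces a correspondence (not just existence) that preserves the number of vectors.

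For the inequality $r_0 \leq \operatorname{EB-rank}(\Phi)$: starting from a rank-one Kraus decomposition $\{u_i v_i^*\}_{i=1}^{r}$ of $\Phi$ with $r = \operatorname{EB-rank}(\Phi)$, I apply the forward direction of the proposition, whose construction produces an isometry $W : \C^d \to \C^r$ whose adjoint has columns $\{w_i\}_{i=1}^{r} \subseteq \C^d$; these satisfy $W^*W = \sum_{i=1}^{r} w_i w_i^* = I_d$ and $\Phi^{C\dagger}(w_i w_i^*) = v_i v_i^*$ for each $i$. Thus a valid family of the type sought in the corollary exists with exactly $r$ elements, so $r_0 \leq r$.

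For the reverse inequality $\operatorname{EB-rank}(\Phi) \leq r_0$: starting from $\{w_i\}_{i=1}^{r_0}$ and $\{v_i\}_{i=1}^{r_0}$ satisfying the two conditions, the converse direction of the proposition produces Kraus operators $L_i = K(\overline{w_i})$ which are rank-one, and the resolution of identity condition $\sum_{i=1}^{r_0} w_i w_i^* = I_d$ ensures that the matrix $W^*$ with columns $w_i$ is an isometry, so $\{L_i\}_{i=1}^{r_0}$ is a valid Kraus family for $\Phi$. Writing $L_i = u_i v_i^*$ then gives a rank-one Kraus decomposition of cardinality $r_0$, so $\operatorname{EB-rank}(\Phi) \leq r_0$.

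There is no real obstacle here — the only thing to be a little careful about is that the proposition's argument indeed produces the same number $r$ of Kraus operators and $w_i$'s on each side, so the bijection is cardinality-preserving; this is immediate from inspecting the construction. Combining the two inequalities yields equality.
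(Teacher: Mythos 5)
Your proof is correct and is exactly the argument the paper intends (the paper leaves the corollary's proof implicit, as an immediate cardinality-tracking consequence of Proposition \ref{compadjointrankone}): the forward direction of that proposition converts a rank-one Kraus decomposition of size $r$ into $r$ vectors $w_i$ with the stated properties, and the converse converts $r_0$ such vectors back into a rank-one Kraus decomposition of size $r_0$, giving both inequalities. No gaps.
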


Before proceeding to our main result in the next section, we link specific behaviour of the complement adjoint with structure of the Choi matrix.

\begin{lemma}\label{projectiontp}
Let $\Phi: M_n(\C) \rightarrow M_m(\C)$ be a channel with $\operatorname{Choi-rank}(\Phi) = d$ and complementary channel $\Phi^C : M_n(\C) \rightarrow M_d(\C)$. The complement adjoint $\Phi^{C\dagger}$ is trace-preserving if and only if the Choi matrix $J(\Phi)$ is  a  projection.
\end{lemma}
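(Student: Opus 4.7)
The plan is to show that both conditions are equivalent to the statement that a minimal set of Kraus operators $\{K_i\}_{i=1}^d$ for $\Phi$ is orthonormal with respect to the Hilbert-Schmidt inner product, i.e., $\tr(K_i^* K_j) = \delta_{ij}$.

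For the forward direction with the complement adjoint: since $\{E_{ij}\}_{i,j=1}^d$ spans $M_d(\C)$, the map $\Phi^{C\dagger}$ is trace-preserving if and only if $\tr(\Phi^{C\dagger}(E_{ij})) = \tr(E_{ij}) = \delta_{ij}$ for all $i,j$. Using Equation (\ref{compadjoint}), $\Phi^{C\dagger}(E_{ij}) = K_i^* K_j$, so this condition reduces precisely to $\tr(K_i^* K_j) = \delta_{ij}$.

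For the Choi matrix side, I would use the standard vectorization of Kraus operators: for each $K_i \in M_{m\times n}(\C)$ set $|K_i\rangle\rangle := \sum_{j=1}^n e_j \otimes K_i e_j \in \C^n \otimes \C^m$. A short computation shows $|K_i\rangle\rangle\langle\langle K_i| = \sum_{j,k} E_{jk} \otimes K_i E_{jk} K_i^*$, and summing over $i$ gives $J(\Phi) = \sum_{i=1}^d |K_i\rangle\rangle\langle\langle K_i|$. Moreover $\langle\langle K_i | K_j\rangle\rangle = \tr(K_i^* K_j)$. Then
\[
J(\Phi)^2 = \sum_{i,k=1}^d \tr(K_i^* K_k) \, |K_i\rangle\rangle\langle\langle K_k|.
\]
Since $J(\Phi)$ is positive semi-definite, $J(\Phi)$ is a projection if and only if $J(\Phi)^2 = J(\Phi)$. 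Because the set $\{K_i\}_{i=1}^d$ is minimal, it is linearly independent, hence so is $\{|K_i\rangle\rangle\}_{i=1}^d$, and so is the family $\{|K_i\rangle\rangle\langle\langle K_k|\}_{i,k}$ in the appropriate matrix space. Equating coefficients in $J(\Phi)^2 = J(\Phi)$ therefore forces $\tr(K_i^* K_k) = \delta_{ik}$, and conversely if this orthonormality holds, the displayed formula collapses to $J(\Phi)$. Combining both equivalences yields the lemma.

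The only step requiring real care is the linear independence argument: I need that minimality of the Kraus set implies linear independence of the vectorizations $|K_i\rangle\rangle$, which follows because the map $K \mapsto |K\rangle\rangle$ is itself a linear isomorphism between $M_{m\times n}(\C)$ and $\C^n \otimes \C^m$. Everything else is bookkeeping with Equation (\ref{compadjoint}) and the spectral-type decomposition $J(\Phi) = \sum_i |K_i\rangle\rangle\langle\langle K_i|$.
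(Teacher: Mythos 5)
Your proof is correct and takes essentially the same route as the paper: both reduce each condition to Hilbert--Schmidt orthonormality of a minimal Kraus family, using $\Phi^{C\dagger}(E_{ij}) = K_i^*K_j$ on one side and the vectorization identity $J(\Phi) = \sum_i k_ik_i^*$ with $\langle k_i,k_j\rangle = \tr(K_i^*K_j)$ on the other. Your treatment of the converse (squaring $J(\Phi)$ and invoking linear independence of the vectorized minimal Kraus operators) is a slightly more explicit version of the step the paper dismisses with ``the argument can be reversed,'' but it is the same argument in substance.
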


\begin{proof}
Let $\{K_i\}_{i=1}^d$ be a set of Kraus operators in $M_{m \times n}(\C)$ for $\Phi$, and further assume $\tr(\Phi^{C\dagger}(X)) =  \tr(X)$ for all $X\in M_d(\C)$. As this holds for all $X$, we have $\Phi^C(I_n) =  I_d$ and so,
\[
\Phi^C(I_n)  = \sum_{i,j=1}^d \tr (K_j^*K_i)E_{ij}  =  I_d .
\]
Hence, it follows that the operators $K_i$ are orthogonal in the trace inner product; specifically, $\tr(K_j^* K_i) = \delta_{ij}$.

The Kraus operators are related to the Choi matrix by defining vectors $k_i = (I_n\otimes K_i)\phi\in \C^n \otimes \C^m$ where $\phi = \sum_{i=1}^n e_i \otimes e_i$, so that $k_i = \mathrm{vec}(K_i)$, which is a block vector whose blocks are the columns of $K_i$ in order. Then the Choi matrix is given by $J(\Phi) = \sum_{i=1}^d k_ik_i^*$.

It is clear that $\tr (K_i^*K_j) = \langle k_i,k_j\rangle$, and so from the above calculation it follows that the vectors $\{  k_i \}_{i=1}^d$ form an orthonormal set. Hence we have $J(\Phi) = \sum_{i=1}^d k_ik_i^* = P$, where $P$ is a rank-$d$ projection.

For the converse direction, the above argument can be reversed to show that $J(\Phi)$ being a projection implies $\Phi^C$ is unital and hence that its adjoint is trace-preserving.
\end{proof}

\section{The Multiplicative Domain of a Completely Positive Map and Entanglement Breaking Rank}

We now discuss the multiplicative domain of a unital completely positive map, and make use of its structure to identify entanglement breaking ranks.

\begin{definition}
Let $\Phi: M_n(\C) \rightarrow M_m(\C)$ be a completely positive map. The \emph{multiplicative domain} of $\Phi$, $\mathcal{M}(\Phi)$, is the set

\begin{equation} \label{MD} \mathcal{M}(\Phi):=\{A: \Phi(AX) = \Phi(A)\Phi(X) \ \& \ \Phi(XA) = \Phi(X)\Phi(A) \ \forall X\}.\end{equation}

The left- and right-multiplicative domains, $\mathcal{M}_L(\Phi)$, $\mathcal{M}_R(\Phi)$, are respectively defined by

\begin{align} \mathcal{M}_L(\Phi) &= \{A: \Phi(AX) = \Phi(A)\Phi(X) \ \forall X\}, \\
 \mathcal{M}_R(\Phi) &= \{A: \Phi(XA) = \Phi(X)\Phi(A) \ \forall X\}.
\end{align}
\end{definition}

The key internal description of the multiplicative domain, as derived originally by Choi, is given as follows. This is Theorem $3.1$ of \cite{choi1974schwarz}, and we include a proof that is slightly different from the original.

\begin{theorem} If $\Phi$ is a unital completely positive map, then $A \in \mathcal{M}_L(\Phi)$ if and only if
\begin{equation}\label{AAstar} \Phi(AA^*) = \Phi(A)\Phi(A^*)\end{equation} and $A \in \mathcal{M}_R(\Phi)$ if and only if
\begin{equation} \label{AstarA} \Phi(A^*A) = \Phi(A^*)\Phi(A).\end{equation}
\end{theorem}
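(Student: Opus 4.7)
The plan is to reduce the theorem to the matrix form of the Kadison–Schwarz inequality and exploit the fact that a positive semi-definite matrix with a zero diagonal entry must have a vanishing row and column. I will prove the $\mathcal{M}_R$ case in full and then obtain the $\mathcal{M}_L$ case by passing to adjoints.

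First I would dispense with the easy direction: if $A \in \mathcal{M}_R(\Phi)$, then setting $X = A^*$ in the defining identity $\Phi(XA) = \Phi(X)\Phi(A)$ gives $\Phi(A^*A) = \Phi(A^*)\Phi(A)$ immediately. The work is in the converse. The key tool is the following $2 \times 2$ Schwarz-type inequality: for every $X$,
\begin{equation*}
\begin{pmatrix} \Phi(X^*X) & \Phi(X^*A) \\ \Phi(A^*X) & \Phi(A^*A) \end{pmatrix} \ \geq \ \begin{pmatrix} \Phi(X^*)\Phi(X) & \Phi(X^*)\Phi(A) \\ \Phi(A^*)\Phi(X) & \Phi(A^*)\Phi(A) \end{pmatrix}.
\end{equation*}
I would obtain this by writing the left-hand side as $(\mathrm{id}_2 \otimes \Phi)(B^*B)$ with $B = \bigl(\begin{smallmatrix} X & A \\ 0 & 0 \end{smallmatrix}\bigr)$, and applying the Kadison–Schwarz inequality to the $2$-positive (indeed completely positive) unital map $\mathrm{id}_2 \otimes \Phi$; the right-hand side is exactly $(\mathrm{id}_2 \otimes \Phi)(B^*) \cdot (\mathrm{id}_2 \otimes \Phi)(B)$.

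Now assume $\Phi(A^*A) = \Phi(A^*)\Phi(A)$. Subtracting yields a positive semi-definite $2 \times 2$ block matrix whose $(2,2)$ entry is zero. Any positive semi-definite matrix with a zero diagonal entry has zeros in the whole corresponding row and column (as follows from looking at $2 \times 2$ principal submatrices and the AM–GM-like Schwarz inequality on entries). Hence the off-diagonal entry $\Phi(X^*A) - \Phi(X^*)\Phi(A)$ must vanish for every $X$. Since $X$ is arbitrary we may replace $X^*$ by $X$, giving $\Phi(XA) = \Phi(X)\Phi(A)$ for all $X$, i.e.\ $A \in \mathcal{M}_R(\Phi)$.

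For the $\mathcal{M}_L$ statement, I would argue that $A \in \mathcal{M}_L(\Phi)$ iff $A^* \in \mathcal{M}_R(\Phi)$: since $\Phi$ sends adjoints to adjoints, taking adjoints of $\Phi(AX) = \Phi(A)\Phi(X)$ for all $X$ is the same as $\Phi(X^*A^*) = \Phi(X^*)\Phi(A^*)$ for all $X$, which is exactly $A^* \in \mathcal{M}_R(\Phi)$. Applying the already-proven $\mathcal{M}_R$ characterization to $A^*$ gives $\Phi(AA^*) = \Phi(A)\Phi(A^*)$, completing the proof. The main obstacle is simply setting up the block matrix correctly so that complete positivity delivers the Schwarz inequality in the form needed; everything after that is a short diagonal-zero argument.
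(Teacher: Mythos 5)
Your proof is correct; it arrives at the same crucial object as the paper's proof --- a positive semi-definite $2\times 2$ block matrix of ``Schwarz defects'' whose vanishing diagonal block forces the off-diagonal blocks to vanish --- but reaches it by a different route. The paper forms the $3\times 3$ block matrix $V^*V$ with $V=\begin{bmatrix} I & A^* & X^*\end{bmatrix}$, applies $3$-positivity of $\Phi$, and takes the Schur complement with respect to the unital $(1,1)$ corner; you instead invoke the Kadison--Schwarz inequality for the unital $2$-positive map $\mathrm{id}_2\otimes\Phi$ evaluated at $B=\bigl(\begin{smallmatrix} X & A\\ 0&0\end{smallmatrix}\bigr)$. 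The two are equivalent in substance (the standard proof of Kadison--Schwarz is itself the same Schur-complement trick one level up), so neither is more general, but your version is more modular: it isolates the operator Schwarz inequality as a reusable black box and needs only a single $2\times 2$ ampliation rather than an explicit $3\times 3$ computation. You also prove the $\mathcal{M}_R$ case and deduce $\mathcal{M}_L$ via the adjoint duality $A\in\mathcal{M}_L(\Phi)\Leftrightarrow A^*\in\mathcal{M}_R(\Phi)$, whereas the paper proves $\mathcal{M}_L$ directly and notes the other case is symmetric; both are fine. One small point worth making explicit: writing the right-hand side of your inequality as $\Psi(B^*)\Psi(B)$ rather than $\Psi(B)^*\Psi(B)$ uses the fact that positive maps preserve adjoints --- trivial here, but it is a step.
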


\begin{proof}  One direction is trivial, so we show the other; we only show the claim for the left-domain, as the proof of the other claim is basically identical. Let $V = \begin{bmatrix} I & A^* & X^*\end{bmatrix}$ so that $V^*V = \begin{bmatrix} I & A^* & X^* \\ A & AA^* & AX^* \\ X & XA^* & XX^*\end{bmatrix}$ is positive semidefinite.

Since $\Phi$ is completely positive, it is in particular $3$-positive, so applying $\Phi$ to the blocks must yield a positive semidefinite matrix as well; as $\Phi$ is unital we get that $$\begin{bmatrix}I & \Phi(A^*) & \Phi(X^*) \\ \Phi(A) & \Phi(AA^*) & \Phi(AX^*) \\ \Phi(X) & \Phi(XA^*) & \Phi(XX^*)\end{bmatrix}$$ is positive semidefinite. Take the Schur complement with respect to the top-left block to see that

$$\begin{bmatrix} \Phi(AA^*) -\Phi(A)\Phi(A^*) & \Phi(AX^*) - \Phi(A)\Phi(X^*) \\ \Phi(XA^*) - \Phi(X)\Phi(A^*) & \Phi(XX^*) - \Phi(X)\Phi(X^*)\end{bmatrix}$$ is positive semidefinite as well.

But if $\Phi(AA^*) = \Phi(A)\Phi(A^*)$, the top-left block of this positive semidefinite matrix is $0$, so the blocks below and right of it must be zero as well. Hence we have 
$\Phi(AX^*) = \Phi(A)\Phi(X^*)$ for all $X$.
\end{proof}

The case of projections is of particular importance to us.

\begin{corollary}\label{projectioncor} A projection $P$ is in the multiplicative domain of a  unital completely positive map $\Phi$ if and only if $\Phi(P)$ is also a projection.
\end{corollary}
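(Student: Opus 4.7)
The plan is to deduce this immediately from the previous theorem characterizing the left and right multiplicative domains, together with the identity $P^2 = P = P^*$ that characterizes a (self-adjoint) projection.

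For the forward direction, I would suppose $P \in \mathcal{M}(\Phi)$. In particular $P \in \mathcal{M}_L(\Phi)$, so applying the defining relation with $X = P$ (or equivalently using Equation~(\ref{AAstar}) of the theorem) yields $\Phi(P) = \Phi(P^2) = \Phi(PP^*) = \Phi(P)\Phi(P^*) = \Phi(P)^2$. Since $\Phi$ is positive (being completely positive) and $P$ is self-adjoint, $\Phi(P)$ is self-adjoint, so the identity $\Phi(P)^2 = \Phi(P)$ forces $\Phi(P)$ to be a projection.

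For the converse, suppose $\Phi(P)$ is a projection. Then $\Phi(PP^*) = \Phi(P^2) = \Phi(P) = \Phi(P)^2 = \Phi(P)\Phi(P^*)$, where the last equality uses $P = P^*$. By the characterization in Equation~(\ref{AAstar}), this puts $P$ in $\mathcal{M}_L(\Phi)$. The same computation with the roles switched (or equivalently by appealing to Equation~(\ref{AstarA}) with $A = P$) gives $P \in \mathcal{M}_R(\Phi)$, so $P$ lies in the full multiplicative domain $\mathcal{M}(\Phi)$.

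There is essentially no obstacle here: the substantive content was already packaged into the preceding theorem, and the whole argument reduces to the single observation that the condition $\Phi(A^*A) = \Phi(A^*)\Phi(A)$ collapses, when $A = P$ is a projection, to the statement that $\Phi(P)$ is idempotent. The only point to be mindful of is invoking self-adjointness of $\Phi(P)$ (via positivity of $\Phi$) when converting idempotence into being a projection.
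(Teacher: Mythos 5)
Your argument is correct and follows essentially the same route as the paper, which likewise deduces the corollary from the theorem characterizing $\mathcal{M}_L(\Phi)$ and $\mathcal{M}_R(\Phi)$ via the computation $\Phi(P^*P)=\Phi(PP^*)=\Phi(P)=\Phi(P)\Phi(P)^*=\Phi(P)^*\Phi(P)$. You merely spell out both directions (including the self-adjointness of $\Phi(P)$ via positivity, which the paper leaves implicit), so there is nothing to add.
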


This follows as $\Phi(P^*P) = \Phi(PP^*) = \Phi(P) = \Phi(P)\Phi(P)^*=\Phi(P)^*\Phi(P)$, so that Equations (\ref{AAstar}, \ref{AstarA}) are both satisfied simultaneously.

Thus, it follows that the multiplicative domain of a unital completely positive map $\Phi$ is a unital $*$-subalgebra (a finite-dimensional C$^*$-algebra), and hence is unitarily equivalent to $\bigoplus_{k=1}^M I_{i_k}\otimes M_{j_k}(\C)$ for some (unique) set of pairs of positive integers $(i_k,j_k)$ \cite{choi1974schwarz}. 

We now apply our results from the previous section in this multiplicative domain context.

\begin{lemma}\label{mdlemma}
Suppose $\Phi: M_n(\C) \rightarrow M_m(\C)$ is an entanglement breaking channel whose Choi matrix is a projection of rank $d$. Let $\{w_i\}_{i=1}^r\subseteq \C^d$ be the vectors guaranteed by Proposition \ref{compadjointrankone} that satisfy
\[
\sum_{i=1}^r w_iw_i^* = I_d ,
\]
and for all $1 \leq i \leq r$,
\[
\Phi^{C\dagger}(w_iw_i^*) = v_iv_i^*.
\]
Then $w_iw_i^* \in \mathcal{M} (\Phi^{C\dagger})$ for all $i$.
\end{lemma}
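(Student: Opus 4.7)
The plan is to apply Choi's characterization of the multiplicative domain (the theorem in Section 3) directly to $A = w_iw_i^*$ for the unital completely positive map $\Phi^{C\dagger}$. Before doing so, I would first record the setup: $\Phi^{C\dagger}$ is unital because $\sum_{i=1}^d \Phi^{C\dagger}(E_{ii}) = \sum_{i=1}^d K_i^*K_i = I_n$ by the trace-preserving property of $\Phi$ combined with Equation (\ref{compadjoint}); and, crucially, Lemma \ref{projectiontp} guarantees that $\Phi^{C\dagger}$ is \emph{also} trace-preserving because $J(\Phi)$ is a projection. This is where the projection hypothesis enters.

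Next, I would use the trace-preserving property of $\Phi^{C\dagger}$ to extract the key numerical identity
\[
\|w_i\|^2 = \mathrm{tr}(w_iw_i^*) = \mathrm{tr}(\Phi^{C\dagger}(w_iw_i^*)) = \mathrm{tr}(v_iv_i^*) = \|v_i\|^2.
\]
With this in hand, the verification of the multiplicative domain condition is a short computation. Setting $A = w_iw_i^*$, note that $A = A^*$ and $AA^* = A^2 = \|w_i\|^2 w_iw_i^*$, so
\[
\Phi^{C\dagger}(AA^*) \;=\; \|w_i\|^2\, \Phi^{C\dagger}(w_iw_i^*) \;=\; \|w_i\|^2\, v_iv_i^*,
\]
while on the other side
\[
\Phi^{C\dagger}(A)\Phi^{C\dagger}(A^*) \;=\; (v_iv_i^*)(v_iv_i^*) \;=\; \|v_i\|^2\, v_iv_i^*.
\]
The identity $\|w_i\|^2 = \|v_i\|^2$ forces these to agree, so by Choi's theorem $A \in \mathcal{M}_L(\Phi^{C\dagger})$. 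Since $A$ is self-adjoint, the computation for $A^*A$ is identical, giving $A \in \mathcal{M}_R(\Phi^{C\dagger})$, and thus $w_iw_i^* \in \mathcal{M}(\Phi^{C\dagger})$.

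There is not really a hard obstacle here once the right assembly is in place; the only subtle point is that $w_iw_i^*$ need not be a projection (the $w_i$ need not be unit vectors), so Corollary \ref{projectioncor} does not apply directly, and one must instead use the more general criterion $\Phi^{C\dagger}(AA^*) = \Phi^{C\dagger}(A)\Phi^{C\dagger}(A^*)$. The step most worth highlighting is the use of Lemma \ref{projectiontp}: it is precisely the projection structure of the Choi matrix that makes $\Phi^{C\dagger}$ trace-preserving, and without that, the scalar matching $\|w_i\|^2 = \|v_i\|^2$ would fail and the rank-one images $v_iv_i^*$ would not line up multiplicatively.
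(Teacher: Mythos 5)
Your proof is correct and follows essentially the same route as the paper: both use Lemma \ref{projectiontp} to get that $\Phi^{C\dagger}$ is trace-preserving, deduce $\|w_i\|=\|v_i\|$, and then conclude membership in the multiplicative domain via Choi's criterion. The only cosmetic difference is that the paper rescales $w_iw_i^*$ and $v_iv_i^*$ to projections and invokes Corollary \ref{projectioncor}, whereas you verify the identity $\Phi^{C\dagger}(AA^*)=\Phi^{C\dagger}(A)\Phi^{C\dagger}(A^*)$ directly; these are equivalent since that corollary is itself an instance of Choi's theorem and the multiplicative domain is closed under scaling.
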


\begin{proof}
By Lemma \ref{projectiontp}, the map $\Phi^{C\dagger}$ is trace-preserving. As $\Phi$ is a channel, so is $\Phi^C$ and hence $\Phi^{C\dagger}$ is also unital. It follows that,
\[
\|w_i\|^2  =  \tr (w_iw_i^*) 
 = \tr (\Phi^{C\dagger}(w_iw_i^*))  = \tr (v_iv_i^*) 
= \|v_i\|^2,
\]
and hence $||w_i|| = ||v_i||$.
This means that $w_iw_i^*$ and $v_iv_i^*$ can be simultaneously re-scaled so they are both projections on their respective Hilbert spaces. That is,
$P_i = \frac{w_iw_i^*}{\|w_i\|^2}$ and $Q_i = \frac{v_i v_i^*}{\|w_i\|^2} = \frac{v_iv_i^*}{\|v_i\|^2}$ are both projections, and they satisfy
\[
\Phi^{C\dagger}(P_i) = Q_i,
\]
for each $i$. Thus by Corollary \ref{projectioncor}, $w_iw_i^*$ are all in the multiplicative domain $\mathcal{M} (\Phi^{C\dagger})$ of $\Phi^{C\dagger}$, as claimed.
\end{proof}

We can now prove our main result.





\begin{theorem}\label{mainthm}
Let $\Phi$ be a channel whose Choi matrix $J(\Phi)$ is a rank-$d$ projection. Then $\Phi$ is entanglement breaking if and only if $\mathcal{M} (\Phi^{C\dagger})$ is unitarily equivalent to an algebra of the form $\bigoplus_{k=1}^M M_{j_k}(\C)$. Moreover, in this case the entanglement breaking rank is the same as the Choi rank;
\[
\operatorname{EB-rank}(\Phi) = \operatorname{Choi-rank}(\Phi) = d.
\]
\end{theorem}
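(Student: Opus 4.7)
The plan is to combine the characterization in Corollary \ref{EBrankcor} with the Wedderburn--Artin structure of the finite-dimensional $C^*$-algebra $\mathcal{M}(\Phi^{C\dagger})$ viewed as an embedded subalgebra of $M_d(\C)$. Throughout, $\Phi^{C\dagger}$ is unital (this follows from trace preservation of $\Phi$ together with Equation (\ref{compadjoint})), and by Lemma \ref{projectiontp} the hypothesis that $J(\Phi)$ is a rank-$d$ projection makes $\Phi^{C\dagger}$ trace-preserving as well. Hence $\mathcal{M}(\Phi^{C\dagger})$ is a unital $*$-subalgebra of $M_d(\C)$, unitarily equivalent to $\bigoplus_{k=1}^{M} I_{i_k} \otimes M_{j_k}(\C)$ with $\sum_k i_k j_k = d$, and rank and trace of projections in the algebra are computed as operators on $\C^d$.

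For the forward direction, I would feed the EB hypothesis into Proposition \ref{compadjointrankone} and Lemma \ref{mdlemma} to produce rank-one projections $P_i := w_i w_i^*/\|w_i\|^2$ lying in $\mathcal{M}(\Phi^{C\dagger})$ together with the weighted resolution $\sum_i \|w_i\|^2 P_i = I_d$. Under the decomposition $\mathcal{M}(\Phi^{C\dagger}) \cong \bigoplus_k I_{i_k} \otimes M_{j_k}(\C)$, every projection in the algebra has the form $\bigoplus_k (I_{i_k} \otimes q_k)$ for projections $q_k \in M_{j_k}(\C)$, and therefore has rank $\sum_k i_k \cdot \mathrm{rank}(q_k)$ as an operator on $\C^d$. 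A rank-one projection in the algebra must then live entirely in a single block with $i_k = 1$. Compressing the identity relation $\sum_i \|w_i\|^2 P_i = I_d$ by the central projection $Z_k$ of a block with $i_k > 1$ annihilates the left-hand side but leaves $Z_k \neq 0$ on the right, forcing every $i_k$ to equal $1$.

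For the converse, assume $\mathcal{M}(\Phi^{C\dagger}) \cong \bigoplus_{k=1}^{M} M_{j_k}(\C)$; unitality then gives $\sum_k j_k = d$. Choosing in each block a set of $j_k$ pairwise orthogonal rank-one projections that sum to the block identity produces $d$ rank-one projections $P_i = w_i w_i^*$ in $\mathcal{M}(\Phi^{C\dagger})$ satisfying $\sum_{i=1}^{d} w_i w_i^* = I_d$. Corollary \ref{projectioncor} forces each $\Phi^{C\dagger}(P_i)$ to be a projection, and trace preservation pins its trace to $1$, so $\Phi^{C\dagger}(w_i w_i^*) = v_i v_i^*$ for unit vectors $v_i$. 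Corollary \ref{EBrankcor} then certifies that $\Phi$ is entanglement breaking with $\operatorname{EB-rank}(\Phi) \leq d$. Combined with the general inequality $\operatorname{Choi-rank}(\Phi) \leq \operatorname{EB-rank}(\Phi)$ and the hypothesis $\operatorname{Choi-rank}(\Phi) = d$, the claimed equality drops out.

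The main obstacle is the forward-direction structural step: one has to exploit the Wedderburn decomposition of $\mathcal{M}(\Phi^{C\dagger})$ \emph{as it sits inside} $M_d(\C)$ (so that ranks in the algebra coincide with ranks as operators) to deduce that rank-one projections can only occupy blocks with $i_k = 1$, and then leverage the resolution of identity to rule out any block with $i_k > 1$. Once this dichotomy is established, both directions and the rank equality follow cleanly from Corollary \ref{EBrankcor}, Corollary \ref{projectioncor}, and Lemma \ref{mdlemma}.
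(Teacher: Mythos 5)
Your proposal is correct and follows essentially the same route as the paper: Proposition \ref{compadjointrankone}, Lemma \ref{mdlemma} and Corollary \ref{projectioncor} are deployed in the same way, and the Wedderburn analysis forcing $i_k=1$ is the paper's argument, with your central-projection compression being just a cleaner phrasing of the paper's surjectivity step. The only cosmetic slip is citing Corollary \ref{EBrankcor} where Proposition \ref{compadjointrankone} is what certifies entanglement breaking; the rank bound then follows exactly as you say.
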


\begin{proof}
Suppose first that the multiplicative domain $\mathcal{M}(\Phi^{C\dagger})$ is unitarily equivalent to $\oplus_{k=1}^M M_{j_k}(\C)$. In particular then, this algebra contains an algebra unitarily equivalent to the diagonal algebra, and hence contains rank-one projections unitarily equivalent to the diagonal matrix units, $E_{ii} = e_ie_i^*$. Thus, up to unitary equivalence, $e_ie_i^*$ are in the multiplicative domain of $\Phi^{C\dagger}$, which note is unital following from the fact that $\Phi$ (and $\Phi^C$) is trace-preserving. Hence by Corollary~\ref{projectioncor} we have that $P_i:=\Phi^{C\dagger}(e_ie_i^*)$ is also a projection. 

Moreover, from Lemma~\ref{projectiontp}, we have that $J(\Phi)$ being a projection is equivalent to $\Phi^{C\dagger}$ being trace-preserving, as well as unital. Thus the rank of $P_i$ is $\mathrm{tr}(P_i) = \mathrm{tr}(\Phi^{C\dagger}(e_ie_i^*)) = 1$, and so in fact $P_i$ is a rank-one projection for each $i$; that is, there is some unit vector $v_i$ such that $\Phi^{C\dagger}(e_ie_i^*) = v_iv_i^*$. 
It is also obvious that $\sum_i e_ie_i^* = I$, and so we satisfy the conditions of Proposition~\ref{compadjointrankone}, and thus $\Phi$ is entanglement breaking. 

For the converse direction, assume that $\Phi$ is entanglement breaking. We also have that $J(\Phi)$ is a projection, and so again by Lemma~\ref{projectiontp}, it follows that $\Phi^{C\dagger}$ is both trace-preserving and unital. From Proposition~\ref{compadjointrankone}, we know that $\Phi$ being entanglement breaking implies the existence of vectors $\{w_i\}, \{v_i\}$ satisfying $\sum_i w_iw_i^* = I$ and  $\Phi^{C\dagger}(w_iw_i^*) = v_iv_i^*$. Further, from Lemma~\ref{mdlemma} we have that the vectors $\{w_iw_i^*\}$ are in the multiplicative domain of $\Phi^{C\dagger}$. 
Thus, $\mathcal{M}(\Phi^{C\dagger})$ contains rank-one projections summing to the identity. 

From the structure theory for matrix algebras, we know that the multiplicative domain of $\Phi^{C\dagger}$ must have the form, up to unitary equivalence, $\oplus_{k=1}^M I_{i_k}\otimes M_{j_k}(\C)$; we now show that if such an algebra contains a set of rank-one projections summing to the identity, that $i_k = 1$ for all $k$.

Suppose $P_i$ are a set of projections in such an algebra summing to the identity; so it must be that each $P_i = \oplus_{k=1}^M I_{i_k}\otimes P_{ik}$ where each $P_{ik}$ is a projection in $M_{j_k}(\C)$, and furthermore we must have that $\sum_i P_{ik} = I_{j_k}$ for each $k$. Moreover, the rank of $P_i$ is of course just its trace:
\[
\mathrm{tr}(P_i) = \sum_{k=1}^m i_k \mathrm{tr}(P_{ik}) = \sum_k i_k \mathrm{rank}(P_{ik}).
\] 
Thus, if $P_i$ are rank-one projections summing to the identity, it must be that $1 = \sum_k i_k \mathrm{rank}(P_{ik})$ and this is clearly only possible if $\mathrm{rank}(P_{ik}) = 0$ for all but one $k$, in which case $\mathrm{rank}(P_{ik}) = i_k = 1$. Hence for each $i$ there is an index $1\leq q_i \leq M$ such that  
$P_i = \oplus_{k=1}^m \delta_{k,q_i} P_{k q_i}$, and $i_{q_i} = 1$. So, we will be done if we can show that the map $i \mapsto q_i$ is a surjection onto $\{1,2,\cdots, M\}$. Suppose not; then there is a direct summand, the $q^{th}$ one say, on which none of the $P_i$ are supported; $P_i$ restricted to the $q^{th}$ direct summand is always 0 for all $i$. But then, $\sum_i P_i$ restricted to that direct summand will also be 0, not $I_{k_q}$, as required for the projections to sum to the identity. So in fact, for each $q$, there is an $i$ such that $q = q_i$, and thus $i_{q_i} = 1$. Hence the algebra has the form $\oplus_{k=1}^M I_1 \otimes M_{j_k} = \oplus_{k=1}^M M_{j_k}(\C)$ as claimed.

Finally, we show that the EB-rank is the same as the Choi rank of $\Phi$, which is $d$. We now know that for a channel with $J(\Phi)$ equal to a projection, being entanglement breaking is equivalent to $\Phi^{C\dagger}$ having a multiplicative domain (up to unitary equivalence) of the form $\oplus_{k=1}^M M_{j_k}(\C)$. Moreover, we know that the matrix unit projections $E_{ii} = e_i e_i^*$ inside the algebra form a set of rank-one projections for which $\Phi^{C\dagger}(e_ie_i^*) = v_iv_i^*$, and so by Corollary~\ref{EBrankcor}, it is thus clear that the EB rank is no greater than $d$. For it to be smaller than $d$, we would have to find a set of fewer than $d$ vectors such that $\sum_i w_iw_i^* = I_d$, but this is clearly impossible, so in fact the EB rank must be exactly $d$. 
\end{proof}

A simple class of examples that Theorem \ref{mainthm} applies to arise from consideration of {\it Schur product channels}. These are channels $\Phi:M_d(\C)\rightarrow M_d(\C)$ of the form
\[
\Phi(X) = X\circ C , 
\]
where $\circ$ indicates the element-wise Schur product, and $C$ is a correlation matrix; a positive semidefinite matrix all of whose diagonal entries are equal to one. Such channels arise in quantum information \cite{levick2017quantum,harris2018schur}; for instance, any random unitary channel implemented by mutually commuting unitaries is also a Schur product channel in some basis.

As an observation we will use below, let us note how the double complement of a map may be identified with the map itself. Indeed, suppose we have a minimal set of Kraus operators for a map $\Phi$, given by $\{K_i\}_{i=1}^n$; so we know that any other minimal set of Kraus operators gives a complement related to the first by $U\Phi^C(X)U^*$ for some unitary $U$. Then suppose $\{L_i\}_{i=1}^{n'}$ are a minimal set of Kraus operators for $\Phi^C$; picking any other set of minimal Kraus operators for $\Phi$ will give instead the channel $U\Phi^C(X)U^*$ which clearly has Kraus operators $\{UL_i\}_{i=1}^{n'}$. 
Now, using the formula for the complement, for any $U$, we can write the complement of the complement: 
\[
(U\Phi^C(X)U^*)^C = \sum_{i,j=1}^{n'} \mathrm{tr}((UL_i)^*(UL_j)X) E_{ji} = \sum_{i,j} \mathrm{tr}(L_i^*L_j X) E_{ji}; 
\]
that is, the presence of the unitary adjunction by $U$ does not change the complement; in fact, even if $U$ is an isometry rather than a unitary, this remains true, so that our choice of Kraus operators for $\Phi$ does not affect our formula for $(\Phi^C)^C$ at all; only a difference choice of Kraus operators for $\Phi^C$ will change this. 
Thus, we can define $(\Phi^C)^C(X)$ up to adjunction to be $V\sum_{i,j} \mathrm{tr}(L_i^*L_jX)E_{ji} V^*$ for any unitary $V$, and some fixed minimal set of Kraus operators for $\Phi^C$. 

Returning to Schur product channels, note that any such channel is automatically unital, and so its complement has the property that $J(\Phi^C)$ is a projection by Lemma~\ref{projectiontp} (making use of the observation above). Further, it is not too hard to show that if the correlation matrix $C$ is the Gram matrix for unit vectors $\{\overline{v_i}\}$, that is, $c_{ij} = \langle \overline{v_i},\overline{v_j}\rangle$, then the complement has the form
\[
\Phi^C(X) = \sum_{i=1}^d x_{ii} v_iv_i^* = \sum_{i=1}^d v_ie_i^* X e_iv_i^* ,
\]
and so is clearly entanglement-breaking, with rank-one Kraus operators $v_ie_i^*$ (and $e_i$ the standard basis vectors for $\mathbb{C}^d$). In this case, $J(\Phi^C)$ is just the projection $P = \bigoplus_{i=1}^d v_iv_i^*$, and by Theorem~\ref{mainthm} the EB-rank of $\Phi^C$ is equal to $d$.

In fact, we can show that such channels are the only ones that satisfy Theorem \ref{mainthm}. By `external' and `internal' unitary equivalence to a map $\Phi$, we mean, respectively, there exist unitaries $U$, $V$ and some map $\Phi^\prime$ such that $\Phi^\prime (X) = U \Phi(X) U^*$ and $\Phi^\prime(X) = \Phi (V X V^*)$ for all $X$. 

\begin{theorem}\label{mainthmconvs} An entanglement breaking channel $\Phi$ has its Choi matrix equal to a projection if and only if $\Phi$ is internally or externally unitarily equivalent to the complement of a Schur product channel.
\end{theorem}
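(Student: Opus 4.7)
The plan splits into the two directions. For ``$\Leftarrow$'', the paragraph preceding the theorem already exhibits the complement $\Phi'^C$ of an arbitrary Schur product channel $\Phi'$ as being entanglement breaking (with rank-one Kraus operators $v_ie_i^*$) and as having projection Choi matrix $\bigoplus_i v_iv_i^*$. Both properties are preserved under internal or external unitary equivalence: externally this follows from $J(U\Phi(\cdot)U^*) = (I\otimes U)J(\Phi)(I\otimes U^*)$, and an analogous conjugation by $V^T\otimes I$ handles the internal case; the rank-one structure of Kraus operators is obviously preserved in either case. So the easy direction reduces to invoking these observations.

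For ``$\Rightarrow$'', suppose $\Phi:M_n(\C)\to M_m(\C)$ is entanglement breaking with $J(\Phi)$ a rank-$d$ projection. Theorem~\ref{mainthm} gives $\operatorname{EB-rank}(\Phi)=\operatorname{Choi-rank}(\Phi)=d$, so $\Phi$ admits a minimal rank-one Kraus decomposition $\Phi(X)=\sum_{i=1}^d u_iv_i^* X v_iu_i^*$ with $u_i\in\C^m$, $v_i\in\C^n$, and $\|u_i\|=1$. Minimality together with $J(\Phi)$ being a projection forces the Kraus operators to be Hilbert--Schmidt orthonormal (this is exactly the calculation used in the proof of Lemma~\ref{projectiontp}), i.e.\ $\mathrm{tr}(K_i^*K_j)=\langle u_i,u_j\rangle\langle v_j,v_i\rangle=\delta_{ij}$; setting $i=j$ yields $\|v_i\|=1$.

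The crux is to upgrade $\{v_i\}$ to an orthonormal basis of $\C^n$. Trace-preservation of $\Phi$ gives $\sum_i v_iv_i^*=I_n$, and taking traces forces $d=n$. So $\{P_i:=v_iv_i^*\}$ is a family of $n$ rank-one projections in $M_n(\C)$ summing to the identity, and the standard identity $\sum_{i\neq j}P_jP_iP_j=0$---with each summand positive semidefinite---forces $P_iP_j=0$ for $i\neq j$. Hence $\{v_i\}$ is an orthonormal basis. Letting $V$ be the unitary with $Ve_i=v_i$ (so $v_i^*V=e_i^*$), one computes
\[
\Phi(VXV^*)=\sum_{i=1}^n u_i e_i^* X e_i u_i^*=\sum_{i=1}^n x_{ii}\,u_iu_i^*,
\]
which is precisely the complement of the Schur product channel on $M_n(\C)$ whose correlation matrix is the Gram matrix $c_{ij}=\langle\overline{u_i},\overline{u_j}\rangle$ (genuinely a correlation matrix because each $u_i$ is a unit vector). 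Thus $\Phi$ is internally unitarily equivalent to the complement of a Schur product channel, as required.

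The main obstacle is establishing the orthonormality of $\{v_i\}$; once that is in place the unitary $V$ conjugates $\Phi$ into the advertised form immediately. Everything else is a routine consequence of Theorem~\ref{mainthm}, Lemma~\ref{projectiontp}, and the explicit description of the complement of a Schur channel given in the discussion preceding the theorem.
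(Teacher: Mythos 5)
Your proof is correct and follows essentially the same route as the paper's: the forward direction rests on the explicit computation of the Schur-channel complement given before the theorem, and the converse uses Theorem~\ref{mainthm} to obtain $d=n$ rank-one Kraus operators $u_iv_i^*$, shows $\{v_i\}$ is an orthonormal basis, and conjugates internally by the unitary $V$ with $Ve_i=v_i$. Your justification of the orthonormality of the $v_i$ (Hilbert--Schmidt orthonormality of a minimal Kraus family for a projection Choi matrix, plus the identity $\sum_{i\neq j}P_jP_iP_j=0$) is in fact more explicit than the paper's one-line assertion, and you identify $\Phi(V\cdot V^*)$ directly with the complement of a Schur channel rather than computing its complement and appealing to the double-complement identification, but these are cosmetic rather than structural differences.
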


\begin{proof} The only thing left to establish the forward direction is to prove the claim above that the complement of a Schur product channel has the desired properties. If $\Phi(X) = X\circ C$ where $c_{ij} = \langle v_i,v_j\rangle$ for some unit vectors $\{v_i\}_{i=1}^n \in \C^d$, then $\Phi$ has $d$ Kraus operators $K_i = \sum_{j=1}^n \overline{v_{ji}}E_{jj}$;
\begin{align*} \Phi(X) & = \sum_{i=1}^d \sum_{j,k} \overline{v_{ji}}v_{ki} E_{jj}XE_{kk}\\
& = \sum_{j,k=1}^n x_{jk} E_{jk}\bigl(\sum_{i=1}^d \overline{v_{ji}}v_{ki}\bigr)\\
& = \sum_{j,k=1}^n x_{jk}E_{jk}\langle v_j,v_k\rangle \\
& = \sum_{j,k=1}^n c_{jk}x_{jk}E_{jk}.
\end{align*}
Then $\Phi^C$ can be written as
\begin{align*}\Phi^C(X) & = \sum_{i,j=1}^d \mathrm{tr}(K_i^*K_jX)E_{ji} \\
&=\sum_{i,j,k,l=1}^{d,n}v_{ki}\overline{v_{lj}} \mathrm{tr}(E_{kk}E_{ll}X)E_{ji}\\
&=\sum_{i,j,k=1}^{d,n} v_{ki}\overline{v_{kj}} x_{kk} E_{ji}\\
&=\sum_{k=1}^n x_{kk} \bigl(\sum_{j,i=1}^d v_{ki}\overline{v_{kj}}E_{ji} \big) \\
&=\sum_{k=1}^n x_{kk} \overline{v_k}v_k^T.
\end{align*}
This clearly has Kraus operators $\{\overline{v_k}e_k^*\}_{k=1}^n$ and so is obviously an entanglement breaking channel. Further, the Choi matrix of this is then $J(\Phi^C) = \sum_{i=1}^n E_{ii}\otimes v_iv_i^*$ and since each $v_i$ is a unit vector, this is a projection. Notice also that the Choi rank of $J(\Phi^C)$ is $n$, as is the EB-rank, as witnessed by the $n$ rank-one Kraus operators $\{\overline{v_k}e_k^*\}$.

Now we show the converse. By Theorem \ref{mainthm}, $\Phi$ must have EB-rank equal to its Choi rank, which is also necessarily the dimension of the underlying space. This last fact follows from the fact that the rank of $J(\Phi)$, the Choi rank, is also the trace of $J(\Phi)$; but by trace-preservation we must have that $(\mathrm{id}\otimes \mathrm{tr})\bigl(J(\Phi)\bigr) = I_d$ and hence
\[\mathrm{tr}(J(\Phi)) = (\mathrm{tr}\otimes \mathrm{tr})\bigl(J(\Phi)\bigr) = \mathrm{tr}(I_d).\]

Thus, there exist $d$ unit vectors $\{u_i\}$ and $d$ vectors $\{v_i\}$ such that
\begin{equation} \Phi(X) = \sum_{i=1}^d u_iv_i^* X v_i u_i^*.\end{equation}
Imposing trace-preservation on the Kraus operators requires
\[
\sum_{i=1}^d v_iu_i^*u_i v_i^* = \sum_{i=1}^d v_iv_i^* = I_d,
\]
and the only way to have $d$ rank-ones adding up to the identity on $\mathbb{C}^d$  is for each to be a projection onto a subspace orthogonal to all the others; hence $\{v_i\}$ is an orthonormal basis for $\C^d$. Thus, if we form the channel $\Phi_V(X) = \Phi(VXV^*)$, where $V$ is the unitary matrix defined by $Ve_i = v_i$, we see that \[\Phi_V(X) = \sum_i u_i v_i^*VXV^*v_i u_i^* = \sum_{i=1}^d u_i e_i^* X e_iu_i^*.\]

Now, we calculate the complement of $\Phi_V^C$:
\begin{eqnarray*}
\Phi_V^C(X) &=& \sum_{i,j=1}^d \mathrm{tr}(e_iu_i^*u_je_j^*X)E_{ji} \\
&=& \sum_{i,j=1}^d \langle u_i,u_j\rangle \langle e_j, X e_i\rangle E_{ji}
= \sum_{i,j=1}^d \langle u_i,u_j\rangle x_{ji}E_{ji} ,
\end{eqnarray*}
and it is clear that this is just $X$ with the $(j,i)$ entry multiplied by $\langle u_i,u_j\rangle =\overline{c_{ij}}$ where $C$ is the Gram matrix for the unit vectors $\{u_i\}$. Thus, $C$ is positive semidefinite with $1$'s down the diagonal, and so indeed we have that \[\Phi_V^C(X) = \overline{C}\circ X\] for the correlation matrix $\overline{C}$, and the claim is proved.
\end{proof}

\section{Concluding Remarks}

As noted above, computing entanglement breaking ranks is important in the theory of quantum entanglement in general, but quite challenging for specific classes of channels, or equivalently for separable states. Thus, even though Theorem~\ref{mainthm} applies to a limited class of channels, as shown by Theorem~\ref{mainthmconvs}, it still contributes to the short list of instances for which this rank can be computed explicitly.

The use of complementary channel properties in the study of entanglement breaking ranks is novel and could conceivably lead to further progress for other classes. Applications from an analysis of the structure of the multiplicative domain is also new in this setting, and is reminiscent of other applications in quantum information noted above of such operator structures. Further study of potential uses of multiplicative domain structures in entanglement theory are warranted.

All of this said, the possible extensions of the specific  approach presented here to wider classes of entanglement breaking channels would appear to be limited.
For instance, we have attempted to extend our results to channels with Choi matrix equal to a scalar multiple of a projection. Some of our results do extend to this more general setting. As an example, we can extend Lemma~\ref{projectiontp} to show the following, where we say that a map $\Phi$ is {\it trace-stabilizing} if for some fixed non-zero scalar $\alpha$, we have $\tr(\Phi(X)) = \alpha \tr(X)$ for all $X$ in the domain of $\Phi$.

\begin{lemma}\label{projection}
The complement adjoint $\Phi^{C\dagger}$ of a channel $\Phi$ is trace-stabilizing if and only if the Choi matrix $J(\Phi)$ is a scalar multiple of a  projection.
\end{lemma}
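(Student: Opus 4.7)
The plan is to follow the proof of Lemma~\ref{projectiontp} essentially verbatim, carrying a scalar $\alpha$ through each step. Because the argument consists of a chain of equivalences, the forward and converse directions are handled simultaneously. The first move is to rephrase the trace-stabilizing condition in terms of $\Phi^C$ evaluated at the identity: using the duality $\tr(\Phi^{C\dagger}(X) Y) = \tr(X \Phi^C(Y))$ with $Y = I_n$ gives
\[
\tr(\Phi^{C\dagger}(X)) = \tr\bigl(X \, \Phi^C(I_n)\bigr),
\]
so $\Phi^{C\dagger}$ is trace-stabilizing with factor $\alpha$ if and only if $\Phi^C(I_n) = \alpha I_d$.

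Next, I fix a minimal Kraus set $\{K_i\}_{i=1}^d$ for $\Phi$. Evaluating the defining formula $\Phi^C(X) = \sum_{i,j} \tr(K_i^* K_j X) E_{ji}$ at $X = I_n$ shows that $\Phi^C(I_n) = \alpha I_d$ is equivalent to the orthogonality relations $\tr(K_i^* K_j) = \alpha \delta_{ij}$. Passing to the Choi matrix through vectorization $k_i = \mathrm{vec}(K_i)$, this becomes $\langle k_i, k_j\rangle = \alpha \delta_{ij}$, so $\{k_i/\sqrt{\alpha}\}_{i=1}^d$ is an orthonormal set. The identity $J(\Phi) = \sum_i k_i k_i^*$ then reads $J(\Phi) = \alpha P$, where $P$ is the rank-$d$ projection onto the span of the $k_i$.

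Chaining these equivalences proves the lemma. The only subtlety, which appears in the converse direction, is to argue that $J(\Phi) = \alpha P$ forces $\tr(K_i^* K_j) = \alpha \delta_{ij}$ for \emph{any} minimal Kraus set, not only the ``canonical'' one coming from the spectral decomposition of $J(\Phi)$. This is handled by Equation~(\ref{changeofKraus}) applied between two minimal Kraus sets, which provides a \emph{unitary} (rather than merely isometric) change of basis on $\C^d$ that preserves both orthogonality and common norm. As a sanity check, since $\Phi^C$ is automatically trace-preserving when $\Phi$ is a channel, the scalar $\alpha$ is forced to equal $n/d$; this is consistent with $\tr(J(\Phi)) = n$ together with $J(\Phi) = \alpha P$ and $\mathrm{rank}(P) = d$.
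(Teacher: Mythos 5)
Your proposal is correct and is essentially the intended argument: the paper states Lemma~\ref{projection} without a separate proof, presenting it as a direct extension of Lemma~\ref{projectiontp}, and your adaptation carries the scalar $\alpha$ through exactly the same chain of equivalences (duality at $I_n$, orthogonality of the $K_i$ up to the common factor $\alpha$, vectorization to $J(\Phi)=\alpha P$). Your added remark on the converse---that any two minimal Kraus sets are related by a unitary, so the Gram matrix $\alpha I_d$ is basis-independent---properly fills in the step the paper dismisses as ``the above argument can be reversed,'' and the consistency check $\alpha = n/d$ is correct.
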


An important example of a channel that satisfies this condition, is the {\it completely depolarizing channel} \cite{nielsen} $\Phi_{\mathrm{CD}} : M_n(\C)\rightarrow M_n(\C)$ defined by $\Phi_{\mathrm{CD}}(X) = \frac{\tr(X)}{n} I_n$. Note this channel has Choi matrix equal to $\frac1n I_{n^2}$, and has Choi rank and EB rank equal to $n^2$.

However, Theorem \ref{mainthm} does not generalize to the case of a channel with Choi matrix equal to a scalar multiple of a projection. A counterexample can be given as follows:
Consider the Werner-Holevo channel $\Phi: M_d(\C)\rightarrow M_d(\C)$ defined by
$$\Phi(X)=\frac{\tr(X)+X^t}{d+1},$$
where $X^t$ is the transpose of X. This is known to be an entanglement breaking map, and its Choi matrix $J(\Phi)$ is the bipartite operator $\frac{I \otimes I +W}{d+1}$, where $W=\sum_{i,j}E_{i,j}\otimes E_{j,i}$. Now note that $P=(I \otimes I+W)/2$ is a projection in $M_d(\C)\otimes M_d(\C)$ and hence
$$J(\Phi)=\frac{2}{d+1}P,$$
which means it is multiple of a projection and it follows that its Choi rank is $\frac{d(d+1)}{2}$. But this channel has EB rank equal to $d^2$ as shown in \cite{pandey2020entanglement}. In fact, separating the EB rank and the Choi rank is a non-trivial task and this is one of only a few examples are known where these two ranks are different.

\vspace{0.1in}

{\noindent}{\it Acknowledgements.} We are grateful to the referee for helpful comments.  D.W.K. was supported by NSERC Discovery Grant 400160. R.P. was supported by NSERC Discovery Grant 400550. M.R. is supported by the European Research Council (ERC Grant Agreement No. 851716). Part of this work was completed during IWOTA Lancaster UK 2021, and as such, the authors would like to express thanks to the conference organisers and acknowledge their funding via EPSRC grant EP/T007524/1.

\bibliographystyle{plain}
\bibliography{KLPRbib}


\end{document}